\setlist{topsep=0mm,partopsep=0mm,itemsep=1mm}
\theoremstyle{plain}
\newtheorem{lemma}{Lemma}[section]
\newtheorem{thm}[lemma]{Theorem}
\newtheorem{prop}[lemma]{Proposition}
\newtheorem*{mainthm}{Main Theorem}
\newtheorem*{thm*}{Theorem}
\theoremstyle{definition}
\newtheorem{cons}[lemma]{Construction}
\newtheorem{que}[lemma]{Question}
\newtheorem{prob}[lemma]{Problem}
\theoremstyle{remark}
\newtheorem{rem}[lemma]{Remark}
\newcommand{\N}{\mathbb{N}}
\newcommand{\Q}{\mathbb{Q}}
\newcommand{\fc}{\mathfrak{c}} 
\newcommand{\NRrev}[1]{{\color{blue} #1}}
\newcommand{\sdpt}{\textup{\textsf{SDP}}-type}
\DeclareMathOperator{\Div}{\textup{\textsf{Div}}}
\newenvironment{thmenumerate}{\begin{enumerate}[label=\textup{(\roman*)},leftmargin=10mm]}{\end{enumerate}}
\newenvironment{nitemize}{\begin{itemize}[label=\textbullet, leftmargin=5mm]}{\end{itemize}}
\begin{document}

\title[Subdirect powers of commutative semigroups]{On the number of countable subdirect powers of finite commutative semigroups}
\author[A. Clayton, N. Ru\v{s}kuc]{Ashley Clayton \& Nik Ru\v{s}kuc}
\address{School of Mathematics and Statistics, University of St Andrews, St Andrews, Scotland, UK}
\email{$\{$ac323,nr1$\}$@st-andrews.ac.uk}
\keywords{Semigroup, commutative, subdirect power.}
\subjclass[2010]{20M10, 20M14, 8B26}

\begin{abstract}
In 1981/82, Hickin \& Plotkin and McKenzie both proved that a finite group has only countably many non-isomorphic subdirect powers if and only if it is abelian. In this paper, we prove that a finite commutative semigroup has only countably many non-isomorphic countable subdirect powers if and only if it is either a finite abelian group or a null semigroup.
\end{abstract}

\thanks{The first author acknowledges support by FCT - Funda\c{c}\~{a}o para a Ci\^{e}ncia e a Tecnologia, under the project CEMAT- Ci\^{e}ncias (UIDP/04621/2020).
The second author acknowledges support from EPSRC EP/V003224/1.}
\maketitle

\section{Introduction}
\label{sec:intro}

Let $S$ be a semigroup. For a set $X$, the \emph{direct power} of $S$ by $X$ is the set 
$S^X$ of all functions $X\rightarrow S$ with multiplication $(fg)(x)=f(x)g(x)$.
When $X=\{1,\dots,n\}$ we can identify $S^X$ with the set of all $n$- tuples under component-wise multiplication.
Similarly, when $X=\N=\{1,2,\dots\}$, we identify $S^X$ with the set of all infinite sequences $(s_1,s_2,\dots)$ under the component-wise multiplication.

A subsemigroup $T$ of a direct power $S^X$ is said to be a \emph{subdirect power} if $T$ projects \emph{onto} each of the $S$-factors, i.e. if $\{ f(x)\::\: f\in T\}=S$ for all $x\in X$.

In 1981/82 Hickin and Plotkin \cite{hickin1981} and McKenzie \cite{mckenzie1982} proved results, a special case of which can be formulated as follows:

\begin{thm}[Hickin and Plotkin; McKenzie]
\label{thm:hpm}
A finite group has only countably many non-isomorphic countable subdirect powers if and only if it is abelian, else it has continuum many such powers.\qed
\end{thm}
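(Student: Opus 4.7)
The plan is to prove the two implications separately, with the abelian direction being a straightforward application of classification and the non-abelian direction being the substantive one.

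For the abelian direction, suppose $G$ is finite abelian of exponent $n$. Then every countable subdirect power $T \leq G^\N$ is itself abelian and of exponent dividing $n$. By the classification of abelian groups of bounded exponent, $T \cong \bigoplus_{d \mid n} C_d^{(\kappa_d)}$ for some cardinals $\kappa_d \in \{0,1,2,\dots\}\cup\{\aleph_0\}$. Since $n$ has only finitely many divisors and each $\kappa_d$ takes only countably many values, this yields at most countably many isomorphism types of countable subdirect powers.

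For the non-abelian direction, the aim is to exhibit $2^{\aleph_0}$ pairwise non-isomorphic countable subdirect powers. Since $G$ is non-abelian, there are elements $a,b \in G$ with $[a,b] \neq 1$, supplying the non-commutation needed to distinguish coordinates. The strategy is to assign, to each subset $A \subseteq \N$, a countable subdirect power $T_A \leq G^\N$ in such a way that some intrinsic isomorphism invariant of $T_A$ recovers $A$ up to at most a countable equivalence (which still leaves $2^{\aleph_0}$ classes). A natural first attempt builds $T_A$ from elements supported on a single coordinate (the sequence with $a$ in position $i$ and $1$ elsewhere, and similarly with $b$), plus extra generators whose presence or absence is controlled by $A$, adjusted to guarantee surjection onto every factor.

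The main obstacle is precisely that an abstract isomorphism $T_A \cong T_B$ need not respect the coordinate structure inherited from $G^\N$, so the distinguishing invariant must be genuinely internal to the group. Plausible candidates include the lattice of characteristic subgroups, the spectrum of centraliser orders, the derived and lower central series, or the isomorphism types of finitely generated subgroups; the hard step is to verify that one of these takes $2^{\aleph_0}$ distinct values as $A$ varies. An alternative route, closer to McKenzie's variety-theoretic approach, would be to show that the variety generated by a non-abelian finite group fails to be residually small and therefore contains continuum many non-isomorphic countable members, each of which embeds as a subdirect power of $G$; the construction of these members, perhaps via ultraproducts or relatively free groups of large rank, then replaces the explicit combinatorial coding above. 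In either route, the crux is the same: an isomorphism-invariant mechanism forcing the underlying set-theoretic data to survive abstraction.
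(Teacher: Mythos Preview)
The paper does not prove this theorem at all: the \verb|\qed| after the statement signals that it is quoted from the literature (Hickin--Plotkin \cite{hickin1981} and McKenzie \cite{mckenzie1982}) and used as a black box. So there is no ``paper's own proof'' to compare against beyond those citations.

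That said, your abelian direction is correct and complete: Pr\"ufer's theorem reduces the isomorphism type of a countable abelian group of bounded exponent to a finite tuple of cardinals each in $\{0,1,2,\dots,\aleph_0\}$, giving only countably many possibilities.

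Your non-abelian direction, however, is not a proof but a list of candidate strategies, and you say so yourself: ``the hard step is to verify that one of these takes $2^{\aleph_0}$ distinct values''. That hard step is the entire content of the theorem in this direction, and neither of your two routes is carried out. For the first route, building $T_A$ from coordinate-supported copies of $a$ and $b$ plus extra generators indexed by $A$, you have not specified any invariant that actually recovers $A$; centraliser spectra, characteristic subgroup lattices, and finitely generated subgroup types are all plausible-sounding but none is shown to work, and in fact naive versions of this construction tend to produce isomorphic groups for many different $A$ (permuting coordinates is an automorphism of $G^\N$). For the second route, the claim that failure of residual smallness yields continuum many countable subdirect powers of $G$ is not correct as stated: residual smallness concerns the sizes of subdirectly irreducible members, not the count of countable members, and in any case one still has to argue that the continuum many algebras produced are subdirect powers of $G$ itself rather than merely members of the variety it generates. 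The actual arguments in \cite{hickin1981} and \cite{mckenzie1982} are substantially more delicate than either sketch, and your proposal does not close the gap.
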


Throughout, by a set being \emph{countable} we mean that it is finite, or that its cardinality is equal to $|\mathbb{N}|=\aleph_0$.
By a countable subdirect power $T\leq S^X$ we mean that $T$ is countable, rather than $X$ being countable. The \emph{continuum} $\fc$ is the cardinality of $\{0,1\}^\N$.

A natural question then arises as to what the situation is for semigroups.
In particular, is it true that finite commutative semigroups have only countably many non-isomorphic subdirect powers?
This turns out to be very far from being the case, and our main result gives a complete classification.
In its statement, a \emph{null semigroup} $S$ is one where there is an element $0\in S$ such that $st=0$ for all $s,t\in S$.

\begin{mainthm}
A finite commutative semigroup $S$ has only countably many non-isomorphic subdirect powers if and only if it is either an abelian group or a null semigroup; otherwise $S$ has continuum many countable subdirect powers. 
\end{mainthm}
 
The rest of the paper is devoted to proving the Main Theorem.
To simplify the terminology, 
we will use the phrase \emph{subdirect power type}, or \sdpt\ for short, to mean the number of non-isomorphic countable subdirect powers of $S$. The considerations split into two parts, depending on the number of idempotents in $S$.
In the case where this number is greater than one, we proceed by showing that $S$ has \sdpt\ $\fc$ when $S$ is: (i) the two-element semilattice (Proposition \ref{prop:U1}); (ii) an arbitrary non-trivial semilattice (Proposition \ref{prop:slatt}); (iii) an arbitrary commutative semigroup with at least two idempotents (Proposition \ref{prop:comm2ids}). The step from (ii) to (iii) is achieved by reference to the decomposition of a semigroup into its \emph{archimedean components}. For the case of a single idempotent, it is an easy observation that null semigroups are of countable \sdpt\ (Proposition \ref{prop:null}).
The remaining considerations are to show that the following are of \sdpt\ $\fc$: (i) $S$ is nilpotent of class $>2$ (Proposition \ref{prop:nilp}); (ii) $S$ is an ideal extension of  
an abelian group by a null semigroup (Proposition \ref{prop:ext}).

\section{Preliminaries and outline of proof}
\label{sec:prelim}

In this section we introduce the basic concepts and facts concerning commutative semigroups, and explain how the subsequent results combine to prove the Main Theorem.

A commutative semigroup $S$ is said to be a \emph{semilattice} if all its elements are \emph{idempotents}, i.e.\ $s^2=s$ for all $s\in S$. Alternatively, by letting $s\leq t\Leftrightarrow st=s$, we can view a semilattice as a partially ordered set in which any pair of elements has a greatest lower bound.
In particular, any linearly ordered set can be viewed as a semilattice with the operation $st=\min(s,t)$.
These two definitions are equivalent, and the notion of isomorphism is the same in both interpretations.

A semigroup $S$ with zero is said to be \emph{nilpotent} of \emph{class} $k$ if $s_1s_2\dots s_k=0$ for all $s_1,\dots,s_k\in S$ but $s_1'\dots s_{k-1}'\neq 0$ for some $s_1',\dots,s_{k-1}'\in S$. The non-trivial null semigroups are precisely nilpotent semigroups of class $ 2$.

An \emph{ideal} in a semigroup $S$ is a non-empty set $I$ such that $SIS\subseteq I$. 
The \emph{Rees congruence} associated with an ideal $I$ is
\[
\rho_I=\{ (s,t)\in S\times S\::\: s=t\ \text{ or }\ s,t\in I\}.
\]
The \emph{Rees quotient} $S/I$ is defined as $S/\rho_I$.
We say that a semigroup $S$ is an \emph{ideal extension} of a semigroup $U$ by a semigroup $V$ if 
$S$ has an ideal $I$ such that $I\cong U$ and $S/I \cong V$.

Let $S$ be a commutative semigroup. The relation $\eta$ defined by
\[
s\eta t\ \Leftrightarrow\ s^k=tu,\ t^l=sv\ \text{ for some } k,l\in\N,\ u,v\in S^1
\]
is a congruence. The equivalence classes of $\eta$ are called \emph{archimedean components} of $S$.
If $e$ and $f$ are distinct idempotents then $(e,f)\not\in \eta$. Thus, every archimedean component contains at most one idempotent.
When $S$ is finite then in fact every archimedean component contains precisely one idempotent, which is the common idempotent power of all the elements in the component. Furthermore, in this case, each archimedean component is an ideal extension of a group by a nilpotent semigroup. The quotient $S/\eta$ is a semilattice. In fact $\eta$ is the smallest semilattice congruence on $S$. For a more systematic introduction we refer the reader to \cite[Section IV.2]{grillet}.

Direct and subdirect powers of a semigroup $S$ were introduced in the previous section.
For $s\in S$ and $X\subseteq S$ we let $\overline{s}:=(s,s,\dots)\in S^\N$, and 
$\Delta_X:=\{ \overline{x}\::\: x\in X\}$. Note that $\Delta_S$ is a subdirect power of $S$ isomorphic to $S$; we call $\Delta_S$ the \emph{diagonal copy} of $S$ in $S^\N$.
If $S$ is a finite semigroup then every countable subdirect power of $S$ is already present in $S^\N$. 
Thus in our effort to demarcate finite commutative semigroups of countable \sdpt\ from those of uncountable \sdpt, it is sufficient to consider the subdirect powers inside $S^\N$.

The outline of the proof of the Main Theorem is as follows. 
Suppose $S$ is a finite commutative semigroup.
If $S$ has more than one idempotent, then  \sdpt\ of $S$ is $\fc$ by Proposition \ref{prop:comm2ids}.
So suppose now that $S$ has only a single idempotent. Then $S$ is a single archimedean component of itself, and is an ideal extension of an (abelian) group $G$ by a nilpotent semigroup $V$.
Suppose first that $G$ is trivial, in which case $S\cong S/G\cong V$ is nilpotent.
If the class $k$ of $S$ is $\leq 2$, then $S$ is a null semigroup, and has a countable \sdpt\ by Proposition \ref{prop:null};
otherwise, when $k\geq 3$, the \sdpt\ of $S$ is $\fc$ by Proposition \ref{prop:nilp}.
Now suppose that $G$ is non-trivial. If $V$ is trivial, then $S\cong G$, and it has a countable \sdpt\ by Theorem \ref{thm:hpm}.
Finally, if both $V$ and $G$ are non-trivial, $S$ has \sdpt\ $\fc$ by Proposition \ref{prop:ext},
and this completes the proof of the Main Theorem.

\section{Semilattices}
\label{sec:slatt}

The purpose of this section is to prove that every non-trivial semilattice $S$ has \sdpt\ $\fc$.
This is done by exhibiting uncountably many non-isomorphic subdirect powers inside $S^\N$.
In fact, since this result is a stepping stone towards proving that every commutative semigroup with at least two idempotents has \sdpt\ $\fc$, we prove more, i.e.\ that there are  uncountably many such subdirect powers consisting of certain special kinds of elements we call \emph{recurring}.
These are sequences $\overline{\sigma}\in S^\N$ that can be obtained from finite sequences $\sigma=(s_1,\dots,s_k)$ ($k\in\N$) by concatenating infinitely many copies together, i.e.\ $\overline{\sigma}:=(s_1,\dots,s_k,s_1,\dots,s_k,s_1,\dots)$.

We begin with the smallest non-trivial semilattice $L_2=\{0,1\}$ under the standard multiplication.
The direct power $L_2^\N$ is an uncountable semilattice, in which the ordering is component-wise, i.e.
\[
(s_1,s_2,\dots)\leq (t_1,t_2,\dots) \ \Leftrightarrow\ s_i\leq t_i \text{ for all } i=1,2,\dots.
\]

Let us recall some terminology for linearly ordered sets. The largest and smallest elements in such a set $(A,\leq)$ are referred to as \emph{end-points}; they may or may not exist.
The set $A$ is said to be dense if for any $a,b\in A$ with $a<b$, there exists $c\in A$ such that $a<c<b$.
The set of rational numbers $\Q$ with standard linear ordering is a countable, dense linearly ordered set without end-points. 
In fact it is a unique linearly ordered set with these properties by a well-known theorem due to Cantor; see
\cite[Theorem 2.8]{rosenstein}.
Furthermore, this linearly ordered set contains all countable linearly ordered sets inside it, and there are uncountably many of them up to isomorphism; see \cite[Theorem 2.5, Corollary 2.6]{rosenstein}. 

So, to prove that $L_2$ 
has \sdpt\ $\fc$ we will find a copy of $\Q$ inside $L_2^\N$. This will imply that $L_2^\N$ contains uncountably many non-isomorphic \emph{subsemigroups}, and we will turn those subsemigroups into \emph{subdirect products} by adjoining $\Delta_{L_2}$ to each of them. 
Furthermore, we will be able to accomplish all this by using recurring elements only. 
Note that the set of all recurring elements is countable.
While this fact is not so important here, as we are embedding a fixed countable semilattice, namely $\Q$, the use of recurring elements at this point in the argument will become crucial on Section \ref{sec:sevids}, where it will genuinely ensure the countability of the subdirect products constructed there.


\begin{prop}
\label{prop:U1}
Let $L_2=\{0,1\}$ be the two-element semilattice, and let $\Q$ be the linearly ordered set of rational numbers.
\begin{thmenumerate}
\item \label{it:U11}
For any two recurring elements $\overline{\alpha},\overline{\beta}\in L_2^\N$ with $\overline{\alpha}<\overline{\beta}$ there exists a recurring element $\overline{\gamma}\in L_2^\N$ such that $\overline{\alpha}<\overline{\gamma}<\overline{\beta}$.
\item \label{it:U12}
$L_2^\N$ contains a subsemilattice isomorphic to $\Q$ consisting of recurring elements.
\item \label{it:U13}
$L_2^\N$ contains uncountably many non-isomorphic countable subdirect powers, each of which
 consists of recurring elements, is a linearly ordered set, and contains $\overline{0}$ and $\overline{1}$.
\item \label{it:U14}
The \sdpt\ of $L_2$ is $\fc$.
\end{thmenumerate}
\end{prop}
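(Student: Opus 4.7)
The plan is to treat the four parts sequentially, with the combinatorial content concentrated entirely in (i) and the remaining parts reducing to it by standard arguments. For (i), I would represent $\overline{\alpha}$ and $\overline{\beta}$ by finite words of common length $p$ equal to the least common multiple of the lengths of their defining periods, so that both are realised as periodic sequences of period $p$. Since $\overline{\alpha}<\overline{\beta}$, there is at least one \emph{gap} index $i\in\{1,\dots,p\}$ with $\alpha_i=0$ and $\beta_i=1$. If there are at least two gaps, I form $\gamma$ from $\alpha$ by switching a single gap entry from $0$ to $1$; the resulting recurring element $\overline{\gamma}$ then lies strictly between $\overline{\alpha}$ and $\overline{\beta}$ because at least one other gap still witnesses $\overline{\gamma}<\overline{\beta}$. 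If there is a unique gap $i$, I double the period: working with period $2p$, I set the $i$th entry of $\gamma$ to $1$ and the $(i+p)$th entry to $0$, leaving $\gamma$ equal to $\alpha$ on the remaining positions; the periodic extension of this $\gamma$ is a recurring element strictly between $\overline{\alpha}$ and $\overline{\beta}$.

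For (ii), fix an enumeration $q_1,q_2,\dots$ of $\Q$ and build recurring elements $\overline{g}_{q_n}\in L_2^{\N}$ one at a time so that $q_m<q_n\Leftrightarrow \overline{g}_{q_m}<\overline{g}_{q_n}$. Choose $\overline{g}_{q_1}$ strictly between $\overline{0}$ and $\overline{1}$ (for example, the period-$2$ element arising from $(1,0)$). At stage $n>1$, the rational $q_n$ either lies between two nearest existing ones, above all existing ones, or below all existing ones; in each case (i) supplies a recurring element in the required open interval, using $\overline{0}$ or $\overline{1}$ as the fall-back bound at the extremes. The resulting chain $\{\overline{g}_q : q\in\Q\}$ is order-isomorphic to $\Q$, and being totally ordered under the semilattice order it is automatically a subsemilattice of $L_2^{\N}$ consisting entirely of recurring elements.

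For (iii) and (iv), I invoke the cited Rosenstein theorem, which supplies continuum-many pairwise non-isomorphic countable linearly ordered sets, all embedding into $\Q$. Composing any such embedding with $q\mapsto\overline{g}_q$ embeds a given order $L$ into $L_2^{\N}$ as a chain of recurring elements, and adjoining $\overline{0}$ and $\overline{1}$ yields a linearly ordered subsemilattice $T_L$ containing $\Delta_{L_2}$, hence a subdirect power with all the stated properties. An isomorphism $T_L\cong T_{L'}$ must send top to top and bottom to bottom, and therefore restricts to an isomorphism of the middle sections, so the $T_L$ are pairwise non-isomorphic; this yields the required uncountable family in (iii). Since $|L_2^{\N}|=\fc$ bounds the total number of countable subsemigroups of $L_2^{\N}$ from above by $\fc$, (iv) follows at once. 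The only real obstacle is (i), and specifically the one-gap case, which is what forces the period-doubling trick; everything after that is formal bookkeeping.
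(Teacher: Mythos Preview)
Your proposal is correct and follows essentially the same approach as the paper. The only cosmetic differences are that in (i) the paper always period-doubles (your single-gap construction applied uniformly, avoiding the case split), and in (ii) the paper simply builds an arbitrary countable dense chain between $\overline{0}$ and $\overline{1}$ and invokes Cantor's theorem rather than embedding $\Q$ along a fixed enumeration.
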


\begin{proof}
\ref{it:U11}
Observe that for any finite sequence $\sigma$, we have $\overline{\sigma}=\overline{\tau}$, where $\tau$ is any finite concatenation of copies of $\sigma$. Therefore, we may assume without loss that $\alpha$ and $\beta$ have the same length, say $\alpha=(a_1,\dots,a_k)$, $\beta=(b_1,\dots,b_k)$.
The assumption $\overline{\alpha}<\overline{\beta}$ means that $a_i\leq b_i$ for all $i=1,\dots,k$, and that at least one inequality is strict, say $a_j=0$, $b_j=1$.
Now for
\[
\gamma:=(a_1,\dots, a_{j-1},0,a_{j+1},\dots,a_k,a_1,\dots,a_{j-1},1,a_{j+1},\dots,a_k),
\]
we have $\overline{\alpha}<\overline{\gamma}<\overline{\beta}$.

\ref{it:U12}
 This follows by a standard argument: start with $\overline{0}$, $\overline{1}$, and iteratively insert a recurring element (whose existence is guaranteed by \ref{it:U11}) between any two points previously constructed. In the limit, this will create a countable, dense linear order with two end-points 
 $\overline{0}$, $\overline{1}$.
 Removing the endpoints leaves us with a copy of $\Q$.
 
 \ref{it:U13}
 The copy of $\Q$ identified in \ref{it:U12} contains copies of all countable linear orders, of which there are uncountably many up to isomorphism.
 Adding back $\overline{0}$, $\overline{1}$ to each of them yields the required family of subdirect powers.
 
 \ref{it:U14}
 This is an immediate consequence of \ref{it:U13}.
\end{proof}

We now turn our attention to an arbitrary finite, non-trivial semilattice $S$.
The idea here is to use the uncountably many subdirect powers guaranteed by Proposition \ref{prop:U1}, and 
`insert' each of them in a suitable part of the diagonal copy $\Delta_S$ of $S$, to yield uncountably many subdirect powers of $S$ in $S^\N$, again with additional desired properties to facilitate the proof of 
Proposition \ref{prop:comm2ids} in the next section.
We now describe the actual construction embodying the above idea.

\begin{cons}
\label{con:ins}
Let $S$ be a finite, non-trivial semilattice.
Let $0$ denote the smallest element of $S$, which must exist because $S$ is finite.
Also, let $e$ denote an arbitrary minimal non-zero element of $S$; this must exist because $S$ is finite and non-trivial.
The set $\{0,e\}$ is a two element subsemilattice of $S^\N$; with a slight abuse of notation let us denote it by $L_2$.
Now, for a subsemilattice $P$ of $L_2^\N$ which contains $\overline{0}$ and $\overline{e}$ let us define 
$\widetilde{P}:=P\cup\Delta_S$.
Intuitively, $\widetilde{P}$ is obtained by taking the semilattice $S$, and replacing the two element subsemilattice $L_2=\{0,e\}$ by an interval isomorphic to $P$; see Figure \ref{fig:tilde}.
\end{cons}

\begin{lemma}
\label{la:tildesub}
$\widetilde{P}$ is a subdirect power of $S$  in $S^\N$.
\end{lemma}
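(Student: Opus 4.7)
The plan is to verify the two defining properties of a subdirect power: (a) $\widetilde{P}$ is closed under the coordinatewise multiplication of $S^\N$, and (b) the projection of $\widetilde{P}$ to each coordinate is all of $S$.

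Part (b) is immediate: since $\Delta_S\subseteq\widetilde{P}$ and $\Delta_S$ already surjects onto $S$ in every coordinate, so does $\widetilde{P}$.

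For part (a), I would split into cases by whether the two factors come from $P$ or from $\Delta_S$. Products of two elements of $P$ stay in $P$ because $P$ is a subsemilattice of $L_2^\N$; products of two diagonal elements stay in $\Delta_S$. The only nontrivial case is a mixed product $\overline{s}\cdot p$ with $s\in S$ and $p\in P$. Here the key observation, and really the only substantive point in the lemma, is that because $e$ is a minimal nonzero element of $S$ and $0$ is absorbing, for every $s\in S$ we have $se\in\{0,e\}$: the element $se$ lies below $e$, and if it were nonzero then minimality of $e$ would force $se=e$. Consequently, if $se=e$ (equivalently $s\geq e$ in the semilattice order), then $\overline{s}\cdot p = p\in P$, since coordinatewise $p_i\in\{0,e\}$ is fixed by multiplication by $s$; and if $se=0$, then every coordinate of $\overline{s}\cdot p$ is zero, so $\overline{s}\cdot p = \overline{0}\in\Delta_S$. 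In both cases the product lies in $\widetilde{P}$.

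I do not anticipate any real obstacle here; the lemma is essentially a bookkeeping check, and the construction of $\widetilde{P}$ is designed precisely so that the minimality of $e$ forces the mixed products to collapse into one of the two pieces of $\widetilde{P}$. The only place where one must take care is in noting why $se\in\{0,e\}$, which relies on $e$ being minimal among nonzero elements rather than merely on $e$ being an atom of some sublattice.
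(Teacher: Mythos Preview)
Your proof is correct and follows essentially the same approach as the paper: both split into the cases $P\cdot P$, $\Delta_S\cdot\Delta_S$, and the mixed case, and resolve the latter by observing that minimality of $e$ forces $se\in\{0,e\}$, yielding $\overline{s}\,p\in\{p,\overline{0}\}\subseteq\widetilde{P}$. The paper's writeup is marginally terser but the argument is identical.
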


\begin{proof}
Each of $P$ and $\Delta_S$ is a subsemigroup of $S^\N$.
Now consider arbitrary $\pi\in P$ and $\sigma=\overline{s} \in \Delta_{S}$.
If $s\geq e$ then $se=es=e$, and hence $\sigma\pi=\pi\sigma=\pi\in \widetilde{P}$.
Similarly,  if $s\ngeq e$, then $se=es=0$ because of minimality of $e$, and hence $\sigma\pi=\pi\sigma=\overline{0}\in\widetilde{P}$.
It follows that $\widetilde{P}$ is a subsemigroup, and the elements from $\Delta_S$ ensure it is a subdirect power.
\end{proof}

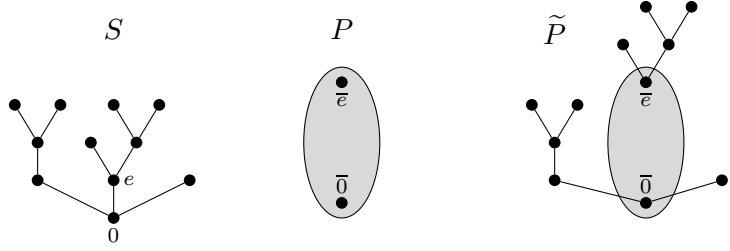
\begin{figure}

\begin{tikzpicture}[vertex/.style={circle,fill,draw, inner sep=0pt,minimum size=4pt}]

\node at (0,2.5) {$S$};

\node [vertex] (0) at (0, 0) {};
\node [vertex] (e) at (0, 0.5) {};
\node [vertex] (v1) at (-0.3, 1) {};
\node [vertex] (v2) at (0.3, 1) {};
\node [vertex] (v3) at (0, 1.5) {};
\node [vertex] (v4) at (0.6, 1.5) {};

\node [vertex] (u1) at (-1, 0.5) {};
\node [vertex] (u2) at (-1, 1) {};
\node [vertex] (u3) at (-1.3, 1.5) {};
\node [vertex] (u4) at (-0.7, 1.5) {};

\node [vertex] (w1) at (1, 0.5) {};

\node at (0,0) [below] {\scriptsize $0$};
\node at (0,0.5) [right] {\scriptsize $e$};

 \path (0) edge (e);
 \path (e) edge (v1);
 \path (e) edge (v2);
\path (v2) edge (v3);
\path (v2) edge (v4);

\path (0) edge (u1);
\path (u1) edge (u2);
\path (u2) edge (u3);
\path (u2) edge (u4);

\path (0) edge (w1);


\node at (3,2.5) {$P$};

\node [ellipse,draw,fill=gray!30,minimum width = 10mm, minimum height=20mm] at (3,1) {};
\node [vertex] at (3, 0.2) {}; \node at (3,0.2) [above] {\scriptsize $\overline{0}$};
\node [vertex] at (3, 1.8) {}; \node at (3,1.8) [below] {\scriptsize $\overline{e}$};


\node at (5.8,2.5) {$\widetilde{P}$};

\node [ellipse,draw,fill=gray!30,minimum width = 10mm, minimum height=20mm] at (7,1) {};
\node [vertex] (t0) at (7, 0.2) {}; \node at (7,0.2) [above] {\scriptsize $\overline{0}$};
\node [vertex] (te) at (7, 1.8) {}; \node at (7,1.8) [below] {\scriptsize $\overline{e}$};

\node [vertex] (tv1) at (6.7, 2.3) {};
\node [vertex] (tv2) at (7.3, 2.3) {};
\node [vertex] (tv3) at (7, 2.8) {};
\node [vertex] (tv4) at (7.6, 2.8) {};

\node [vertex] (tu1) at (5.8, 0.5) {};
\node [vertex] (tu2) at (5.8, 1) {};
\node [vertex] (tu3) at (5.5, 1.5) {};
\node [vertex] (tu4) at (6.1, 1.5) {};

\node [vertex] (tw1) at (8, 0.5) {};

 \path (te) edge (tv1);
 \path (te) edge (tv2);
\path (tv2) edge (tv3);
\path (tv2) edge (tv4);

\path (t0) edge (tu1);
\path (tu1) edge (tu2);
\path (tu2) edge (tu3);
\path (tu2) edge (tu4);

\path (t0) edge (tw1);

\end{tikzpicture}

\caption{The construction of $\widetilde{P}$ from $S$ and $P$.}
\label{fig:tilde}
\end{figure}

\begin{lemma}
\label{lemma:nonisotilde}
If $P_1$ and $P_2$ are non-isomorphic infinite subsemilattices of $L_2^{\N}$
containing $\overline{0}$ and $\overline{e}$, then $\widetilde{P}_1\not\cong\widetilde{P}_2$.
\end{lemma}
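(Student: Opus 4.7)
The plan is to show that $P_i$ can be recovered from $\widetilde{P}_i$ as a semilattice-invariant subset, so that any isomorphism $\phi\colon\widetilde{P}_1\to\widetilde{P}_2$ restricts to an isomorphism $P_1\to P_2$, which contradicts $P_1\not\cong P_2$. The key observation is that $P_i$ coincides with the principal ideal $[\overline{0},\overline{e}]=\{x\in\widetilde{P}_i:x\le\overline{e}\}$, so it suffices to pin down the element $\overline{e}$ inside $\widetilde{P}_i$ using only the semilattice structure.

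The proposed invariant is that $\overline{e}$ is the unique $z\in\widetilde{P}_i$ satisfying $|\widetilde{P}_i\setminus[\overline{0},z]|=|S|-2$. I verify this by analysing the possible locations of $z$ in the decomposition $\widetilde{P}_i=P_i\cup\Delta_S$. When $z=\overline{e}$ the ideal $[\overline{0},\overline{e}]$ equals $P_i$, so the complement is exactly $\Delta_S\setminus\{\overline{0},\overline{e}\}$, of size $|S|-2$. When $z\in P_i\setminus\{\overline{0},\overline{e}\}$, the sequence $z$ has at least one coordinate equal to $0$ (since $z\ne\overline{e}$), which forces the only $\overline{t}\in\Delta_S$ with $\overline{t}\le z$ to be $\overline{t}=\overline{0}$; hence $[\overline{0},z]\subseteq P_i$ and the complement already contains the whole of $\Delta_S\setminus\{\overline{0}\}$, giving at least $|S|-1$ elements. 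When $z=\overline{s}$ with $s\in S\setminus\{0\}$ and $s\not\ge e$, the ideal $[\overline{0},z]$ is contained in $\Delta_S$ and is therefore finite, so the complement is infinite. Finally, when $z=\overline{s}$ with $s>e$, one checks $[\overline{0},z]=P_i\cup\{\overline{t}:t\le s\}$, so the complement equals $\{\overline{t}:t\not\le s\}$ and has size $|S|-|\{t\le s\}|\le|S|-3$, because $\{0,e,s\}$ are three distinct elements of $\{t\le s\}$. The remaining case $z=\overline{0}$ is trivial.

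The cardinality condition is plainly preserved by every isomorphism of semilattices, so any $\phi\colon\widetilde{P}_1\to\widetilde{P}_2$ must send $\overline{e}$ to $\overline{e}$, and consequently $\phi(P_1)=\phi([\overline{0},\overline{e}])=[\overline{0},\overline{e}]=P_2$. This contradicts the assumption that $P_1$ and $P_2$ are non-isomorphic.

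The main obstacle is making the case analysis completely watertight, and in particular the subcase $z\in P_i\setminus\{\overline{0},\overline{e}\}$, where the specific form of elements of $P_i$ as $\{0,e\}$-valued sequences is genuinely used to rule out nontrivial $\overline{t}\le z$; the other cases reduce to routine bookkeeping inside $S$.
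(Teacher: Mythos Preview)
Your proof is correct and follows essentially the same strategy as the paper: characterise $\overline{e}$ inside $\widetilde{P}_i$ by an order-theoretic cardinality invariant, conclude that any isomorphism fixes $\overline{e}$, and deduce $P_1\cong P_2$ from $P_i=\overline{e}^\downarrow$. The only difference is cosmetic---the paper pins down $\overline{e}$ via the pair of conditions ``$\overline{e}^\downarrow$ is infinite and $|\overline{e}^\uparrow|=|\{s\in S:s\ge e\}|$'', whereas you use the single condition $|\widetilde{P}_i\setminus\overline{e}^\downarrow|=|S|-2$; both work for the same reasons and require the same case analysis.
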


\begin{proof}
For an element $\sigma\in \widetilde{P}_i$ ($i=1,2$), let
$\sigma^\downarrow:=\{\tau\in\widetilde{P}_i\::\: \tau\leq\sigma\}$, the principal ideal generated by $\sigma$, and
$\sigma^\uparrow:=\{\tau\in\widetilde{P}_i\::\: \tau\geq\sigma\}$, the principal filter generated by $\sigma$.
In what follows, it will be clear from context whether these sets are meant to be taken in $\widetilde{P}_1$ or $\widetilde{P}_2$.
Note that
\begin{equation}
\label{eq:nit1}
\overline{e}^\downarrow=P_i,\ \overline{e}^\uparrow=\{ \overline{s}\::\: s\in S,\ s\geq e\}.
\end{equation}

Now suppose that there is an isomorphism $f: \widetilde{P}_1\rightarrow\widetilde{P}_2$.
From \eqref{eq:nit1} it follows that, in each $\widetilde{P}_i$, the element $\overline{e}$ is the unique element whose principal ideal is infinite, and whose principal filter has size $|\{s\in S\::\: s\geq e\}|$.
Therefore we must have $f(\overline{e})=\overline{e}$.
But then
\[
f(P_1)=f(\overline{e}^\downarrow)=f(\overline{e})^\downarrow=\overline{e}^\downarrow=P_2,
\]
contradicting the assumption $P_1\not\cong P_2$, and proving the lemma.
\end{proof}

\begin{prop}
\label{prop:slatt}
Let $S$ be a non-trivial finite semilattice.
Then $S^\N$ contains continuum many non-isomorphic subdirect powers of $S$, each of which consists of recurring elements and contains the diagonal copy $\Delta_{S}$ of $S$.
In particular, $S$ has \sdpt\ $\fc$.
\end{prop}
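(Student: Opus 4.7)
The plan is to lift the continuum of non-isomorphic subdirect powers of $L_2$ provided by Proposition \ref{prop:U1}\ref{it:U13} to a corresponding family for $S$, using Construction \ref{con:ins} together with Lemmas \ref{la:tildesub} and \ref{lemma:nonisotilde}. First, since $S$ is finite and non-trivial, I would fix a minimal non-zero element $e\in S$ and identify the two-element subsemilattice $\{0,e\}\le S$ with $L_2$ exactly as in Construction \ref{con:ins}; this embeds $L_2^\N$ canonically as a subsemilattice of $S^\N$.

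Next, I would invoke Proposition \ref{prop:U1}\ref{it:U13} to obtain a family $\{P_\lambda:\lambda\in\Lambda\}$ of size $|\Lambda|=\fc$ of pairwise non-isomorphic countable subdirect powers of $L_2$ in $L_2^\N$, each linearly ordered, consisting of recurring elements, and containing $\overline{0}$ and $\overline{e}$. Since there is a unique linear order of each finite cardinality up to isomorphism, only countably many of the $P_\lambda$ can be finite; discarding those retains a subfamily of size $\fc$ of infinite $P_\lambda$. For each such $\lambda$, set $\widetilde{P}_\lambda:=P_\lambda\cup\Delta_S$. Lemma \ref{la:tildesub} gives that each $\widetilde{P}_\lambda$ is a subdirect power of $S$ in $S^\N$. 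It contains $\Delta_S$ by construction; consists entirely of recurring elements, since both $P_\lambda$ and $\Delta_S$ do; and is countable because $P_\lambda$ is countable and $\Delta_S$ is finite. Finally, Lemma \ref{lemma:nonisotilde} ensures that $P_\lambda\not\cong P_\mu$ implies $\widetilde{P}_\lambda\not\cong\widetilde{P}_\mu$, so $\{\widetilde{P}_\lambda\}$ exhibits $\fc$ many pairwise non-isomorphic countable subdirect powers of $S$ with all the required properties. The ``in particular'' clause then follows at once, the matching upper bound being automatic from $|S^\N|=\fc$.

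I do not foresee any serious obstacle: all the substantive work has already been carried out in Proposition \ref{prop:U1} and the two preceding lemmas, and this proof amounts essentially to book-keeping. The one small point worth flagging is the reduction from ``uncountably many'' non-isomorphic $P_\lambda$ to ``$\fc$ many \emph{infinite}'' ones, which is immediate from the uniqueness of the finite linear order of each given cardinality.
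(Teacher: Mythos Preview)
Your proof is correct and follows essentially the same approach as the paper, which simply says to take the $\fc$ many subdirect powers of $L_2$ from Proposition~\ref{prop:U1}, apply Construction~\ref{con:ins}, and invoke Lemmas~\ref{la:tildesub} and~\ref{lemma:nonisotilde}. You are in fact slightly more careful than the paper in explicitly discarding the finitely many finite $P_\lambda$ so that Lemma~\ref{lemma:nonisotilde} (which assumes infinite $P_i$) applies cleanly.
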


\begin{proof}
The result follows by taking \NRrev{$\fc$} many subdirect powers of $L_2$ guaranteed by Proposition \ref{prop:U1},
applying the above construction to each of them, and appealing to Lemmas 
\ref{la:tildesub} and \ref{lemma:nonisotilde}.
\end{proof}

\begin{rem}
Had our sole aim been to prove that non-trivial semilattices have \sdpt\ $\fc$, we could have accomplished it much more
rapidly then the preceding argument by using Birkhoff's Subdirect Representation Theorem; e.g.\ see \cite[Theorem 4.44]{ALV87}. 
According to this theorem, every semilattice is a subdirect product of subdirectly irreducible semilattices. As is well known, the two-element semilattice
$L_2$ is in fact the only subdirectly irreducible semilattice (e.g.\ see \cite[Corollary 2 (i), p.\ 172]{ALV87}), and this proves  Proposition \ref{prop:U1} \ref{it:U14}.
One can then deploy a variety of constructions, one of them being our Construction \ref{con:ins}, to extend this to every finite semilattice having \sdpt\ $\fc$.
However, we do need the extra information contained in Proposition \ref{prop:slatt} to facilitate the argument in the next section.
\end{rem}

\section{Commutative semigroups with more than one idempotent}
\label{sec:sevids}

In this section we prove that the \sdpt\ of every finite commutative semigroup with at least two idempotents is $\fc$.

Let $S$ be such a semigroup, and let $E=E(S)$ be its semilattice of idempotents.
Since $S$ is finite, each archimedean component of $S$ contains precisely one idempotent from $E$.
Therefore we can index the archimedean components of $S$ as $A_e$, for $e\in E$, where $e\in A_e$.

Since $|E|>1$, by Proposition \ref{prop:slatt} the semilattice $E^\N$ contains uncountably many non-isomorphic subdirect powers of $E$,
each of which contains the diagonal copy of $E$, and consists of recurring elements.
To prove that $S$ has \sdpt\ $\fc$ we will associate to each such subdirect power of $E^\N$ a subdirect power of $S^\N$, and prove that they are all non-isomorphic.

Let $\phi : S\rightarrow E$ be the natural epimorphism, specifically $\phi(s)=e$ when $s\in A_e$.
Extend $\phi$ in a component-wise manner to epimorphisms $S^k\rightarrow E^k$ ($k\in\N$) and $S^\N\rightarrow E^\N$;
these mappings will also be denoted by $\phi$ by a slight abuse of notation.
Now, for a subset $U\subseteq E^\N$ consisting of recurring elements, let
\[
\widehat{U}= \{ \overline{\sigma}\::\: \sigma\in S^k,\ k\in\N,\ \overline{\phi(\sigma)}\in U\}.
\]
Intuitively, we can think of $\widehat{U}$ as follows:
take in turn every $\tau=(e_1,\dots, e_k)\in E^k$ ($k=1,2,\dots$) such that $\overline{\tau}\in U$, 
and replace each $e_i$ by all possible elements of its archimedean component $A_{e_i}$.
For each $\sigma\in S^k$ thus obtained, $\overline{\sigma}$ is an element of $\widehat{U}$.

\begin{lemma}
\label{la:hatsub}
If $U$ is a subsemilattice of $E^\N$ and consists of recurring elements, then $\widehat{U}$ is a subsemigroup of $S^\N$,
with  $U$ its  semilattice of idempotents.
\end{lemma}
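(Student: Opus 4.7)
My plan is to verify the two assertions of the lemma---that $\widehat{U}$ is closed under the operation of $S^\N$, and that its idempotents are precisely the elements of $U$---in that order.

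For closure, I would take arbitrary $\overline{\sigma},\overline{\tau}\in\widehat{U}$ arising from finite tuples $\sigma\in S^k$ and $\tau\in S^\ell$. Borrowing the length-equalising trick already used in the proof of Proposition \ref{prop:U1}\ref{it:U11}, I would replace $\sigma$ by its $\ell$-fold concatenation and $\tau$ by its $k$-fold concatenation; this leaves $\overline{\sigma}$ and $\overline{\tau}$ unchanged but allows us to assume the two generating tuples have a common length $k$. Then $\overline{\sigma}\cdot\overline{\tau}=\overline{\sigma\tau}$, where $\sigma\tau$ is the componentwise product in $S^k$. Because $\phi$ is a homomorphism and $U$ is a subsemilattice of $E^\N$, we have $\overline{\phi(\sigma\tau)}=\overline{\phi(\sigma)}\cdot\overline{\phi(\tau)}\in U$, and hence $\overline{\sigma}\cdot\overline{\tau}\in\widehat{U}$.

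For the idempotent statement, the inclusion $U\subseteq\widehat{U}$ is immediate: each $\overline{u}\in U$ arises from some $u\in E^k\subseteq S^k$, and since $\phi$ is the identity on $E$, we have $\overline{\phi(u)}=\overline{u}\in U$, so $\overline{u}\in\widehat{U}$. Moreover, every element of $U$ is an idempotent of $E^\N$ (as $U$ is a semilattice), hence an idempotent of $S^\N$, hence an idempotent of $\widehat{U}$. The reverse inclusion is where the small argument lives: if $\overline{\sigma}\in\widehat{U}$ is an idempotent of $\widehat{U}$, equivalently of $S^\N$, and arises from $\sigma=(s_1,\dots,s_k)\in S^k$, then componentwise $s_i^2=s_i$ for every $i$, so each $s_i$ lies in $E$. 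Consequently $\phi(\sigma)=\sigma$, and $\overline{\sigma}=\overline{\phi(\sigma)}\in U$ by definition of $\widehat{U}$.

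I do not anticipate a genuine obstacle: the lemma is essentially bookkeeping, resting on two routine facts---the length-equalisation trick for recurring sequences, and the observation that an element of $S^\N$ is idempotent if and only if each of its coordinates is---both of which are already part of the authors' toolkit. The one place to be slightly careful is not to conflate idempotents of $\widehat{U}$ with idempotents of $S^\N$ lying in $\widehat{U}$; but since $\widehat{U}$ inherits its multiplication from $S^\N$ these coincide, so no difficulty arises.
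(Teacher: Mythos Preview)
Your proof is correct and follows essentially the same approach as the paper's own proof: the length-equalisation trick for recurring sequences to establish closure, and the observation that idempotents of $S^\N$ are precisely the sequences with all coordinates in $E$ to identify the semilattice of idempotents as $U$. Your treatment is slightly more expansive---you spell out both inclusions for the idempotent statement and explicitly note that $\phi$ restricts to the identity on $E$---but the underlying argument is identical.
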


\begin{proof}
Let $\overline{\sigma},\overline{\tau}\in \widehat{U}$, with $\overline{\phi(\sigma)},\overline{\phi(\tau)}\in U$.
By replacing $\sigma$ and $\tau$ by appropriate concatenations, we can assume without loss that they have equal lengths. But then
\[
\overline{\phi(\sigma\tau)}=\overline{\phi(\sigma)\phi(\tau)}=\overline{\phi(\sigma)}\,\overline{\phi(\tau)}\in U,
\]
because $U$ is a subsemilattice. It follows that $\overline{\sigma}\,\overline{\tau}=\overline{\sigma\tau}\in \widehat{U}$,
and so $\widehat{U}$ is a subsemigroup. An element $\overline{\sigma}\in \widehat{U}$ is an idempotent if and only if all its entries are idempotents, which is equivalent to $\overline{\sigma}=\phi(\overline{\sigma})=\overline{\phi(\sigma)}\in U$.
\end{proof}

\begin{prop}
\label{prop:comm2ids}
Every finite commutative semigroup with at least two idempotents has \sdpt\ $\fc$.
\end{prop}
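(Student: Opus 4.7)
The plan is to apply Proposition \ref{prop:slatt} to the semilattice of idempotents $E=E(S)$, which has at least two elements by hypothesis, and then use the construction $U\mapsto\widehat{U}$ to transport an uncountable family of subdirect powers of $E$ up to an uncountable family of subdirect powers of $S$. Concretely, I would first invoke Proposition \ref{prop:slatt} to fix a family $\{U_\alpha:\alpha<\fc\}$ of pairwise non-isomorphic countable subdirect powers of $E$ in $E^\N$, each consisting of recurring elements and containing $\Delta_E$. For each $\alpha$, form $\widehat{U_\alpha}$; by Lemma \ref{la:hatsub} this is a subsemigroup of $S^\N$ whose semilattice of idempotents is precisely $U_\alpha$.

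Next, I would verify the two outstanding properties needed to make each $\widehat{U_\alpha}$ a countable subdirect power of $S$. For subdirectness: given any $s\in S$, say $s\in A_e$, one has $\overline{\phi(\overline{s})}=\overline{e}\in\Delta_E\subseteq U_\alpha$, so $\overline{s}\in\widehat{U_\alpha}$; hence $\Delta_S\subseteq\widehat{U_\alpha}$, which guarantees surjectivity of every coordinate projection. For countability: by construction every element of $\widehat{U_\alpha}$ is a recurring sequence in $S^\N$, and since $S$ is finite there are only countably many such sequences.

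Finally, to distinguish $\widehat{U_\alpha}$ from $\widehat{U_\beta}$ for $\alpha\neq\beta$, observe that any semigroup isomorphism $\widehat{U_\alpha}\to\widehat{U_\beta}$ must send idempotents to idempotents and hence restrict to a semilattice isomorphism between their idempotent subsemigroups. By Lemma \ref{la:hatsub} these idempotent subsemigroups are exactly $U_\alpha$ and $U_\beta$, which were chosen to be non-isomorphic, a contradiction. This produces $\fc$ pairwise non-isomorphic countable subdirect powers of $S$, as required. The main obstacle here is conceptual rather than computational: once $\widehat{\cdot}$ has been set up and the semilattice of idempotents is identified as an isomorphism invariant, essentially all the delicate work has already been delegated to Proposition \ref{prop:slatt} and Lemma \ref{la:hatsub}, so the remaining argument is just invariant-chasing.
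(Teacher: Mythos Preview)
Your proposal is correct and follows essentially the same approach as the paper: invoke Proposition~\ref{prop:slatt} to get $\fc$ non-isomorphic recurring subdirect powers of $E$, lift via $\widehat{\cdot}$, and use Lemma~\ref{la:hatsub} to recover each $U_\alpha$ as the idempotent subsemilattice of $\widehat{U_\alpha}$. Your subdirectness argument (showing $\Delta_S\subseteq\widehat{U_\alpha}$ via $\Delta_E\subseteq U_\alpha$) is a clean variant of the paper's, though the notation $\overline{\phi(\overline{s})}$ is slightly off---you want $\overline{\phi(\sigma)}$ with $\sigma=(s)\in S^1$, so that $\overline{\phi(\sigma)}=\overline{e}$.
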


\begin{proof}
Let $S$ be such a semigroup, and retain all the notation from above.
By Proposition \ref{prop:slatt}, there exists a family $U_i$ ($i\in I$) of $\fc$ many pairwise non-isomorphic subdirect powers of $E$ in $E^\N$
consisting of recurring elements.
By definition, each $\widehat{U}_i$ also consists of recurring elements, and hence is countable.
Also, each $\widehat{U}_i$ is a subdirect power of $S$, because in our construction of $\widehat{U}_i$ each occurrence of some $e\in E$ in some tuple in $U_i$ is at some point replaced by all elements from its archimedean component, and the archimedean components partition $S$. Finally, the semigroups $\widehat{U}_i$ ($i\in I$) are pairwise non-isomorphic by Lemma \ref{la:hatsub}, because $U_i$ can be recovered from $\widehat{U}_i$ as its semilattice of idempotents. 
\end{proof}

\section{Nilpotent semigroups}
\label{sec:nil>2}

We now turn our attention to nilpotent semigroups. The status of those of nilpotency class $2$, i.e.\ of null semigroups, is easy to resolve:

\begin{prop}
\label{prop:null}
Every finite null semigroup has \sdpt \, $\aleph_{0}$.
\end{prop}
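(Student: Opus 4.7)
The key observation is that if $S$ is a null semigroup with zero $0$, then $S^{\N}$ is also null, since for any $f,g\in S^{\N}$ and any $x\in\N$ we have $(fg)(x)=f(x)g(x)=0$, so $fg=\overline{0}$. Consequently every subdirect power $T\leq S^{\N}$ is itself a null semigroup with zero $\overline{0}$. This reduces the classification of countable subdirect powers of $S$ up to isomorphism to the classification of null semigroups up to isomorphism.

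The plan is to exploit the fact that a null semigroup is determined up to isomorphism by its cardinality alone (any bijection that sends zero to zero is an isomorphism, since all products on both sides equal the zero). Since $\Delta_S\leq T$ forces $|T|\geq |S|$, and $T$ is countable, the possible cardinalities of $T$ form the set $\{|S|,|S|+1,|S|+2,\dots,\aleph_0\}$, which is countably infinite.

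Next I would verify that every such cardinality is actually realised. The diagonal copy $\Delta_S$ already gives a subdirect power of size $|S|$. To obtain a subdirect power of any larger cardinality $k$ with $|S|\leq k\leq\aleph_0$, I would pick any countable set $F\subseteq S^{\N}$ with $|F|=k-|S|$ (finite or countably infinite as required) and form $T:=\Delta_S\cup F\cup\{\overline{0}\}$. Since products in $S^{\N}$ all equal $\overline{0}\in\Delta_S$, this set is closed under multiplication; the presence of $\Delta_S$ ensures the surjective projection onto each coordinate; and, by choosing the extra elements to be pairwise distinct and distinct from diagonal elements, we achieve cardinality $k$.

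Combining these two points, the countable subdirect powers of $S$, grouped by cardinality, yield exactly $\aleph_0$ isomorphism classes, which is the desired conclusion. There is no real obstacle here: the argument is essentially a counting exercise once one notices that the null multiplication collapses all structure to cardinality.
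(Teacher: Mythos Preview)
Your approach is the same as the paper's: a null semigroup is determined up to isomorphism by its cardinality, so there are only countably many countable ones. The paper's proof is in fact just that one sentence; you supply additional detail by exhibiting subdirect powers of every cardinality between $|S|$ and $\aleph_0$.

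One step is wrong, though it does not damage the conclusion. You write ``$\Delta_S\leq T$ forces $|T|\geq |S|$'', but a subdirect power $T\leq S^{\N}$ need not contain $\Delta_S$. For instance, with $S=\{0,a\}$ null, the set $T=\{\overline{0}\}\cup\{\sigma(i,a):i\in\N\}$, where $\sigma(i,a)$ has $a$ in position $i$ and $0$ elsewhere, is a subdirect power not containing $\overline{a}$. The inequality $|T|\geq |S|$ is nevertheless correct, since any single coordinate projection $T\to S$ is surjective. (Note also that the lower bound, and your realisation of every cardinality, tacitly assumes $|S|\geq 2$; for the trivial semigroup there is only one subdirect power.)
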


\begin{proof}
A null semigroup is determined up to isomorphism by its size, and so there are only countably many countable null semigroups. 
\end{proof}

For the rest of this section we will be concerned with nilpotent semigroups $S$ of class greater than $2$.
It turns out that we do not need to assume commutativity in our considerations.
For two elements $s,t\in S$ we say that $t$ is a (left) divisor of $s$ if $s=tu$ for some $u\in S$.
Denote by $\Div_S(s)$ the set of all divisors of $s$.

%


Now assume that $S$ is finite, and let its nilpotency class be $k>2$.
We aim to show that the \sdpt\ of $S$ is $\fc$.
To this end we introduce a construction which will yield a family of continuum many non-isomorphic countable subdirect powers of $S$.

\begin{cons}
Let $S$ be a finite nilpotent semigroup of nilpotency class $k>2$.
Thus there exists
$x=s_1\dots s_{k-1}\neq 0$, with $s_1,\dots,s_{k-1}\in S$, and we must have $xS=Sx=\{0\}$.
Keeping in mind that $k>2$, let $y:= s_1\dots s_{k-2}$, so that $y\in\Div_S(x)$.

Now consider $S^\N$. It is an uncountable nilpotent semigroup of class $k$.
Define
\begin{align*}
\sigma(i,s)&:=(\underbrace{0,\dots,0}_{i-1},s,0,0,\dots) \quad (i\in\N,\ s\in S),\\
\chi(i,j) &:= (\underbrace{0,\dots,0}_{i-1},y,\underbrace{x,\dots,x}_{j},0,0,\dots)\quad (i,j\in\N).
\end{align*}
For an infinite subset $M=\{ m_1,m_2,\dots\}\subseteq \N$, where $m_1<m_2<\dots$, let
\[
T_M :=  \{ \sigma(i,s)\::\: i\in\N,\ s\in S\}\cup
 \{ \chi(i,j) \: :\: i\in\N,\ 1\leq j\leq m_i\}.
\]
\end{cons}

The only non-zero products of elements from $T_M$ are:
\begin{align}
\label{eq:mulSM1}
\sigma(i,s)\sigma(i,t)&=\sigma(i,st) \quad (\text{if } st\neq 0),\\
\label{eq:mulSM2}
\sigma(i,s)\chi(i,j) &= \sigma(i,sy)\quad (\text{if } sy\neq 0),\\
\label{eq:mulSM3}
\chi(i,j)\sigma(i,s)&=\sigma(i,ys)\quad (\text{if } ys\neq 0),\\
\label{eq:mulSM4}
\chi(i,j)\chi(i,l)&=\sigma(i,y^2)\quad (\text{if } y^2\neq 0).
\end{align}
We then immediately conclude:

\begin{lemma}
\label{la:SMsub1}
$T_M$ is a subdirect power of $S$ in $S^\N$.
\qed
\end{lemma}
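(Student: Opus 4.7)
The plan is a direct verification based on the multiplication table \eqref{eq:mulSM1}--\eqref{eq:mulSM4} displayed just before the lemma. I would split the argument in two parts: closure of $T_M$ under the coordinate-wise multiplication, and surjectivity of each coordinate projection.

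For closure, I would first note that the four identities \eqref{eq:mulSM1}--\eqref{eq:mulSM4} exhaust all cases in which a product of two generators of $T_M$ can be nonzero. It remains to observe that any other pairing --- either mismatched starting coordinates $i \neq i'$, or a matching-coordinate pair whose relevant product in $S$ happens to vanish --- yields the zero sequence $\overline{0}$. This is immediate from the nilpotency relations $xS = Sx = \{0\}$: wherever the supports of two generators overlap, at least one factor in that coordinate is either $0$ or $x$, and the former trivially gives $0$ while the latter annihilates everything in $S$. Each resulting product is therefore either of the form $\sigma(i, \cdot) \in T_M$, or is the zero sequence, which coincides with $\sigma(1,0) \in T_M$; hence $T_M$ is a subsemigroup of $S^\N$.

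For the subdirect property, I would simply note that for every coordinate $n \in \N$ and every $s \in S$, the element $\sigma(n, s)$ lies in $T_M$ by definition and has $s$ in its $n$-th coordinate. Consequently the $n$-th projection of $T_M$ onto $S$ is surjective, which is exactly what is required.

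I anticipate no real obstacle; the whole statement is a routine bookkeeping exercise once the multiplication rules have been recorded. The only subtlety worth flagging is the need to check that all unlisted products actually vanish, and this follows from $xS = Sx = \{0\}$ combined with the disjoint-support structure built into the generators $\sigma(i,s)$ and $\chi(i,j)$.
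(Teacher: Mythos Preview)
Your argument is correct and is exactly what the paper has in mind: the lemma is stated with a bare \qed, meaning it is regarded as an immediate consequence of the displayed multiplication rules \eqref{eq:mulSM1}--\eqref{eq:mulSM4} together with the observation that all remaining products vanish. Your write-up simply spells out this verification, including the subdirect part via the elements $\sigma(n,s)$.
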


\begin{lemma}
\label{la:nodiv}
For $s\in S\setminus\{0\}$, $i\in\N$, $1\leq j \leq m_i$, we have
\begin{align*}
\bigl|\Div_{T_M}(\sigma(i,s))\bigr|& = \begin{cases} |\Div(s)| &\textup{  if } y\not\in\Div_S(s),\\
                                                             |\Div(s)|+m_i &\textup{  if } y\in\Div_S(s),
                                       \end{cases}\\
\bigl|\Div_{T_M}(\chi(i,j))\bigr|    &= 0.                          
\end{align*}
\end{lemma}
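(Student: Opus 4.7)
The argument is a direct case analysis using the product table of $T_M$ recorded in \eqref{eq:mulSM1}--\eqref{eq:mulSM4}. The key structural observation is that every non-zero product of two elements of $T_M$ has the form $\sigma(i,\cdot)$---the $\chi$-elements never arise as products. This immediately gives $|\Div_{T_M}(\chi(i,j))|=0$: no element of $T_M$ can be expressed as a product equal to $\chi(i,j)$, and in particular $\chi(i,j)$ does not divide itself.

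For $\Div_{T_M}(\sigma(i,s))$ with $s\neq 0$, the first step is to rule out candidate divisors $\tau\in T_M$ with ``wrong'' index. If $\tau=\sigma(i',t)$ with $i'\neq i$ then $\tau$ vanishes at position $i$, so $\tau u$ does too for every $u\in T_M$. If $\tau=\chi(i',j')$ with $i'\neq i$, then the $i$-th entry of $\tau$ is either $0$ or $x$; in the latter case $\tau u$ at position $i$ is annihilated by $xS=Sx=\{0\}$. Hence no such $\tau$ divides $\sigma(i,s)$.

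The remaining candidates are $\sigma(i,t)$ for $t\in S$ and $\chi(i,j')$ for $1\leq j'\leq m_i$. Inspecting \eqref{eq:mulSM1}--\eqref{eq:mulSM2}, one sees that $\sigma(i,t)$ is a divisor of $\sigma(i,s)$ precisely when $t\in\Div_S(s)$: the extra possibility $u=\chi(i,l)$ contributes only $ty=s$, which is subsumed by $t\in\Div_S(s)$ since $y\in S$. Distinct $t\in\Div_S(s)$ give distinct $\sigma(i,t)$, contributing $|\Div_S(s)|$ to the count. Analogously, by \eqref{eq:mulSM3}--\eqref{eq:mulSM4}, $\chi(i,j')$ is a divisor of $\sigma(i,s)$ precisely when $y\in\Div_S(s)$, independently of $j'$; this contributes either $m_i$ (all of $j'\in\{1,\dots,m_i\}$) or $0$. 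Since $\sigma$- and $\chi$-type elements are clearly distinct as sequences (they have different supports), summing the two disjoint families yields the stated formula.

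I do not anticipate any serious obstacle: once the product table is in hand the lemma is essentially bookkeeping, with the annihilation property $xS=Sx=\{0\}$ doing the work of killing all cross-index contributions so that the divisor set localises to the ``stalk'' at position $i$.
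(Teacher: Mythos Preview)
Your proof is correct and follows the same approach as the paper, namely reading off the divisor sets directly from the multiplication table \eqref{eq:mulSM1}--\eqref{eq:mulSM4}. The paper's own argument is terser and leaves the exclusion of wrong-index candidates implicit in the claim that \eqref{eq:mulSM1}--\eqref{eq:mulSM4} are \emph{all} the non-zero products, whereas you spell this out explicitly using $xS=Sx=\{0\}$; both are fine.
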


\begin{proof}
From \eqref{eq:mulSM1} we see that $\sigma(i,s)$ certainly has divisors $\sigma(i,t)$, where $t\in \Div_S(s)$.
Furthermore, by \eqref{eq:mulSM3}, it will also have all $\chi(i,j)$ ($j=1,\dots,m_i$) as divisors, provided $y\in \Div_S(s)$.
Finally no $\chi(i,j)$ is a product of two elements of $T_M$, i.e.\ it has no divisors.
\end{proof}

\begin{prop}
\label{prop:nilp}
A finite nilpotent semigroup of class greater than $2$ has \sdpt\ $\fc$.
\end{prop}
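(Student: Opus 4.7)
My plan is to show that the family $\{T_M : M \subseteq \N \textup{ infinite}\}$, supplied by the preceding construction and Lemma \ref{la:SMsub1}, contains $\fc$ pairwise non-isomorphic members. Since each $T_M$ is countable by construction, this will suffice.

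The isomorphism invariant I would exploit, suggested by Lemma \ref{la:nodiv}, is the divisor-count spectrum
\[
I_M := \bigl\{\, |\Div_{T_M}(z)| \: : \: z \in T_M \setminus \{\overline{0}\}\,\bigr\}.
\]
Setting $F := \{|\Div_S(s)| : s \in S \setminus \{0\},\ y \in \Div_S(s)\}$ and $G := \{|\Div_S(s)| : s \in S \setminus \{0\},\ y \notin \Div_S(s)\}$, Lemma \ref{la:nodiv} gives $I_M = \{0\} \cup G \cup (F + M)$, where $F + M := \{f + m : f \in F,\ m \in M\}$. Both $F$ and $G$ are subsets of $\{0,1,\dots,|S|\}$, and $F$ is non-empty because $x = y \cdot s_{k-1}$ forces $y \in \Div_S(x)$, hence $|\Div_S(x)| \in F$.

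The main step is to exhibit $\fc$ different values of $I_M$. For this I would restrict $M$ to range over the infinite subsets of a fixed sparse set $P = \{p_1 < p_2 < \cdots\} \subseteq \N$ chosen with $p_1 > |S|$ and $p_{i+1} - p_i > |S|$ for all $i$. For such $M$ the translates $\{m + F : m \in M\}$ are pairwise disjoint and lie in $(|S|, \infty)$, and so are disjoint from $\{0\} \cup G \subseteq [0, |S|]$; therefore $I_M \cap (|S|, \infty) = \bigsqcup_{m \in M}(m + F)$, from which $M$ can be recovered unambiguously (for instance as $\{n - \min(F) : n \in I_M,\ n - \min(F) \in P\}$, using that distinct elements of $P$ differ by more than $|S| \geq \max(F) - \min(F)$). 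This makes $M \mapsto I_M$ injective on the $\fc$ infinite subsets of $P$, yielding $\fc$ pairwise non-isomorphic countable subdirect powers of $S$. The only substantive point is precisely this reconstruction of $M$ from $F + M$, which is exactly what the sparsity of $P$ is engineered to provide; everything else follows formally once Lemma \ref{la:nodiv} is in hand.
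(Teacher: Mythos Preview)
Your proposal is correct and follows essentially the same approach as the paper. Both arguments use the divisor-count spectrum supplied by Lemma~\ref{la:nodiv}, restrict $M$ to a sparse arithmetic-like set so that the shifted copies of the finite set of possible $|\Div_S(s)|$ values do not overlap, and conclude that distinct $M$ yield distinct spectra; the paper simply takes the concrete choice $P=\{n,2n,3n,\dots\}$ with $n=|S|$ and compares two sets $M,P$ directly rather than phrasing the injectivity via the map $M\mapsto I_M$.
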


\begin{proof}
Let $S$ be such a semigroup, with all the notation as in the preceding discussion.
Let $n:=|S|$.
The semigroups $ T_M$, where $M$ is an infinite subset of $\{ n,2n,3n,\dots\}$, provide 
a family of continuum many countable subdirect powers by Lemma~\ref{la:SMsub1}.
We prove that they are pairwise non-isomorphic.
To this end, let $M$ and $P$ be two distinct subsets of $\{n,2n,\dots\}$.
Let the elements of $M$ be $m_1<m_2<\dots$, and let those of $P$ be $p_1<p_2<\dots$.
Without loss assume that $m_i\not \in P$ for some $i$.
By Lemma \ref{la:nodiv}, the element $\sigma(i,x)$ has $|\Div_S(x)|+m_i$ divisors, because $y\in \Div_S(x)$ by choice of $x$ and $y$.
We claim that no non-zero element of $T_P$ has this number of divisors.
By Lemma \ref{la:nodiv}, possible numbers of divisors for such an element are $0$, $|\Div_S(s)|$ and $|\Div_S(s)|+p_j$,
where $s\in S\setminus\{0\}$, $j\in \N$.
Since $n=|S|$, we have
\[
|\Div_S(x)|+m_i>n>|\Div_S(s)|\geq 0.
\]
Also, remembering that $m_i,p_j\in\{n,2n,\dots\}$ and $m_i\neq p_j$, we have
\[
|m_i - p_j| \geq n > |\Div_S(s)|-|\Div_S(x)|,
\]
which implies
\[
|\Div_S(x)|+m_i\neq |\Div_S(s)|+p_j.
\]
Therefore $T_M$ and $T_P$ cannot be isomorphic, and the proposition is proved.
\end{proof}

\section{Ideal extensions of groups by nilpotent semigroups}
\label{sec:nilext}

In this section we prove that every finite ideal extension of a non-trivial group by a non-trivial nilpotent semigroup has \sdpt \, $\fc$. Given such a semigroup, we again will detail a construction similar to the nilpotent case in order to do this. Yet again, there is no need to assume commutativity.

\begin{cons} Let $S$ be a finite semigroup such that its minimal ideal is a non-trivial group $G$, and the quotient $S/G$ is nilpotent of class $k>1$. Thus we have
\[
s_1s_2\dots s_k\in G \quad \text{for all } s_1,\dots,s_k\in S,
\]
but  there exist some $s_1',\cdots ,s_{k-1}'\in S$ such that
\[
x:=s_1'\dots s_{k-1}'\not\in G.
\]
Notice that $xS\cup Sx\subseteq G$. In particular, $x^t\in G$ for all $t\geq 2$.
%
Denote by $e$ the identity element of $G$, which is also the unique idempotent of $S$.
Let $\underline{x}:=ex\in G$, and fix an arbitrary $g\in G$ with $g\neq \underline{x}$.

The direct power $S^\N$ is again an ideal extension of a group by a $k$-nilpotent semigroup. 
Its minimal ideal is the group $G^\N$, but the quotient 
$S^\N/G^\N$ is not the same as $(S/G)^\N$.

We want to exhibit a family of continuum many countable subdirect powers of $S$ in $S^\N$.
To this end, for any
infinite subset $M=\{ m_1,m_2,\dots\} \subseteq \N$,
with  $m_1<m_2<\dots$, define
\[
W_M:= G^\infty\cup \Delta_S\cup U_M,
\]
where
\begin{align*}
G^\infty &:= \bigl\{ (g_1,g_2,\dots)\in G^\N \::\: g_{p+1}=g_{p+2}=\dots \text{ for some } p\geq 0\bigr\},\\
\Delta_S &:= \{ (s,s,\dots)\::\: s\in S\},\\
U_M&:=\bigl\{ (\underbrace{e,\dots,e}_{p-1},g,\underbrace{x,\dots,x}_{q},\underline{x},\underline{x},\dots)\::\: 
p\in\N,\ 1\leq q\leq m_p\bigr\}.
\end{align*}
\end{cons}

\begin{lemma}
\label{la:SMsub}
With the notation as above, each $W_M$ is a countable subdirect power in~$S^\N$.
\end{lemma}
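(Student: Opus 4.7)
The plan is to verify three things about $W_M$: countability, component-wise surjectivity onto $S$, and closure under multiplication. The first two are essentially bookkeeping, and the substance of the lemma is the closure check, which splits into a small number of cases.

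For countability, $G^\infty$ is a countable union $\bigcup_{p\geq 0}G^{p}\times\{(g,g,\dots):g\in G\}$ of finite sets (since $G$ is finite), $\Delta_S$ is finite, and $U_M$ is indexed by pairs $(p,q)$ with $p\in\N$ and $1\leq q\leq m_p$, hence countable. For subdirectness, the diagonal $\Delta_S\subseteq W_M$ immediately projects onto $S$ at every coordinate.

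For closure, I would do a case analysis on which of the three building blocks the two factors come from. The easy cases are $\Delta_S\cdot\Delta_S\subseteq\Delta_S$ and $G^\infty\cdot G^\infty\subseteq G^\infty$ (the eventually-constant tail property is preserved under component-wise multiplication in $G$). The strategy for all remaining cases is to show that the product lands in $G^\infty$, using two facts: (a) $G$ is a two-sided ideal of $S$, and (b) $x$ is a product of $k-1$ elements of $S$, so for every $s\in S$ both $sx$ and $xs$ are products of $k$ elements of $S$ and therefore lie in $G$ (and of course $\underline{x}=ex\in G$). Thus in any product where at least one factor comes from $G^\infty$ or $U_M$, every entry of the product lies in $G$: when multiplying an element of $U_M$ whose entries range over $\{e,g,x,\underline{x}\}$ by any entry from $S$, each position produces something in $G$ by (a) or (b). Combined with the fact that both factors are eventually constant (the diagonal trivially, $G^\infty$-elements by definition, $U_M$-elements with tail $\underline{x}$), the product is eventually constant in $G$, hence in $G^\infty\subseteq W_M$.

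The only subtle point — which I expect to be the main bookkeeping obstacle — is the case $U_M\cdot U_M$. Here one has to compute the component-wise products of the six possible pairs from $\{e,g,x,\underline{x}\}$ (and two distinct $g$'s coming from $G$) and verify each lies in $G$: $e\cdot e=e$, $e\cdot g=g$, $e\cdot x=\underline{x}$, $g\cdot g'\in G$ (group), $g\cdot x,\,x\cdot g\in G$ (ideal or via (b)), $x\cdot x\in G$ (product of $2(k-1)\geq k$ elements), and similarly every product involving $\underline{x}$ lies in $G$ since $\underline{x}\in G$. The tail of the product is $\underline{x}\cdot\underline{x}\in G$, constant, so the result is in $G^\infty$. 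Since commutativity is never used in any of these arguments, the symmetric cases require no separate treatment beyond checking both orders in (b), and the closure is complete.
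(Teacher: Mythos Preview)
Your proof is correct and follows essentially the same approach as the paper's: countability and subdirectness are immediate, and closure is established by checking $\Delta_S\cdot\Delta_S\subseteq\Delta_S$, $G^\infty\cdot G^\infty\subseteq G^\infty$, and that every remaining product lands in $G^\infty$ (the paper compresses this last step into the two observations $G^\infty\cup\Delta_S\leq S^\N$ and $W_MU_M\cup U_MW_M\subseteq G^\infty$). One trivial remark: in the construction $g$ is a single fixed element of $G$, so in the $U_M\cdot U_M$ case you never actually encounter two distinct $g$'s---but this has no bearing on the correctness of your argument.
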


\begin{proof}
That $W_M$ is a subsemigroup of, and subdirect power in, $S^\N$ follows from the following facts:
\begin{nitemize}
\item
$G^\infty\leq G^\N$;
\item
$\Delta_S$ is a subdirect power in $S^\N$;
\item
$G^\infty\cup  \Delta_S\leq S^\N$;
\item
$W_M U_M\cup U_MW_M\subseteq G^\infty$.
\end{nitemize} 
Countability is obvious.
\end{proof}

To demonstrate that certain $W_M$ are pairwise non-isomorphic, we will use \emph{roots}, defined as follows.
Let $m:=|G|$. Recall that 
$h^m=e$ and $h^{m+1}=h$ for all $h\in G$,
and hence
$\sigma^m=\overline{e}$ and $\sigma^{m+1}=\sigma$ for all $\sigma\in G^\N$.
For $\sigma\in G^\infty$ let
\[
\sqrt[m+1]{\sigma}:=\bigl\{ \tau\in W_M\setminus G^\infty\::\: \tau^{m+1}=\sigma\bigr\}.
\]

\begin{lemma}
\label{la:countroots}
Suppose $|S|=n$, and let $M$ be an infinite subset of $\{n+1,n+2,\dots\}$. With the notation as above, we have
\[
\bigl\{ \bigl|\sqrt[m+1]{\sigma}\bigr|\::\: \sigma\in W_M \bigr\}\cap \bigl\{ n+1,n+2,\dots\bigr\} =M.
\]
\end{lemma}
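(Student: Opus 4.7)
The plan is to compute $\tau^{m+1}$ explicitly for every $\tau \in W_M\setminus G^\infty$, group the results by the value $\sigma=\tau^{m+1}$, and count. First observe that $W_M\setminus G^\infty$ decomposes as the disjoint union $U_M \sqcup (\Delta_S \setminus \Delta_G)$: every element of $U_M$ has an entry equal to $x\notin G$, so it is not in $G^\N$ at all, whereas the elements of $\Delta_G$ are constant $G$-sequences, hence in $G^\infty$. I treat each of the two pieces separately.

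The core algebraic input is the identity $x^{m+1}=\underline{x}$. To prove it, I would first show by induction that $x^\ell=\underline{x}^{\,\ell}$ for every $\ell\ge 2$. The base case uses only that $x^2\in G$ (since $S/G$ is nilpotent of class $k>1$), together with $e$ being the identity of $G$:
\[
x^2 \;=\; ex^2 \;=\; (ex)x \;=\; \underline{x}\cdot x \;=\; (\underline{x}\, e)x \;=\; \underline{x}\,(ex) \;=\; \underline{x}^{\,2},
\]
and the inductive step is entirely analogous (each $x^\ell$ lies in $G$ for $\ell\ge 2$, so one can insert the identity $e$ freely). Since $\underline{x}\in G$ and $|G|=m$, we conclude $\underline{x}^{\,m+1}=\underline{x}$. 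Combined with $e^{m+1}=e$ and $g^{m+1}=g$, a componentwise computation then gives, for $\tau=(\underbrace{e,\ldots,e}_{p-1},g,\underbrace{x,\ldots,x}_{q},\underline{x},\underline{x},\ldots)\in U_M$,
\[
\tau^{m+1} \;=\; (\underbrace{e,\ldots,e}_{p-1},g,\underline{x},\underline{x},\ldots),
\]
a sequence that depends only on $p$, not on $q$. Denote this sequence by $\sigma_p\in G^\infty$.

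Now I carry out the counting. The set $\{\tau\in U_M:\tau^{m+1}=\sigma_p\}$ contains exactly $m_p$ elements, one for each $q\in\{1,\dots,m_p\}$, while for any $\sigma$ not of the form $\sigma_p$ no element of $U_M$ maps to $\sigma$ under $(\,\cdot\,)^{m+1}$. On the other side, for $\tau\in\Delta_S\setminus\Delta_G$ the sequence $\tau^{m+1}$ is constant, so $\{\tau\in\Delta_S\setminus\Delta_G:\tau^{m+1}=\sigma\}$ can be non-empty only when $\sigma$ is a constant sequence, in which case it has at most $|S\setminus G|\le n$ elements. The hypothesis $g\ne\underline{x}$ is exactly what ensures that every $\sigma_p$ is non-constant (it takes the value $g$ at position $p$ but the value $\underline{x}$ on its tail), so the $U_M$ and $\Delta_S$ contributions to any single $\sqrt[m+1]{\sigma}$ are never simultaneously non-zero.

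Combining these observations, $|\sqrt[m+1]{\sigma_p}|=m_p\in M\subseteq\{n+1,n+2,\ldots\}$ for each $p\in\N$, while $|\sqrt[m+1]{\sigma}|\le n$ for every other $\sigma\in G^\infty$. Intersecting the set of root-counts with $\{n+1,n+2,\ldots\}$ therefore leaves precisely $\{m_p:p\in\N\}=M$. The main obstacle is establishing $x^{m+1}=\underline{x}$ without appealing to commutativity, using only $xS\cup Sx\subseteq G$; once this identity is in hand, the remainder of the argument is a clean bookkeeping exercise.
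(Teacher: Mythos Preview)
Your proof is correct and follows essentially the same approach as the paper: decompose $W_M\setminus G^\infty$ as $(\Delta_S\setminus\Delta_G)\sqcup U_M$, establish $x^{m+1}=\underline{x}^{\,m+1}$ (hence $=\underline{x}$), compute the $(m+1)$st powers componentwise, and observe that $g\neq\underline{x}$ keeps the $U_M$-targets $\sigma_p$ away from the constant sequences produced by $\Delta_S\setminus\Delta_G$. Your inductive derivation of $x^\ell=\underline{x}^{\,\ell}$ is just a more explicit rendering of the paper's chain $x^{m+1}=ex^{m+1}=exex^m=\cdots=(ex)^{m+1}$; the only cosmetic slip is that your closing line restricts to ``every other $\sigma\in G^\infty$'' whereas the lemma ranges over $\sigma\in W_M$, but your preceding analysis already covers the remaining (constant, non-$G$) targets with the same $\le n$ bound.
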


\begin{proof}
The lemma is proved by computing the $(m+1)$st powers of all elements of 
$W_M\setminus G^\infty= (\Delta_S\setminus\Delta_G)
\cup U_M$. 

First, however, consider the element $x^{m+1}$. Since $m\geq 2$, we have $x^{m+1}\in G$, and so
\begin{equation}
\label{eq:xm1}
x^{m+1}=ex^{m+1}=exex^m=\dots =(ex)^{m+1}=\underline{x}^{m+1}.
\end{equation}

Now, for $\sigma\in \Delta_S\setminus\Delta_G$ we have $\sigma^{m+1}\in \Delta_S$,
whereas for
\begin{equation}
\label{eq:sigma}
\sigma=(\underbrace{e,\dots,e}_{p-1},g,\underbrace{x,\dots,x}_{q},\underline{x},\underline{x},\dots)\in U_M
\end{equation}
we have, using \eqref{eq:xm1},
\begin{equation}
\label{eq:roots}
\sigma^{m+1}=(\underbrace{e,\dots,e}_{p-1},g,\underline{x},\underline{x},\dots).
\end{equation}
Note that this last element is never in $\Delta_{S}$, because $g\neq \underline{x}$.
From this, it first of all follows that the elements in $\Delta_S\setminus\Delta_G$ have $\leq n$ roots.
And then
it follows that the elements having more than $n$ roots are precisely those appearing in \eqref{eq:roots}.
The number of roots this element has is precisely the number of possible values of $q$ in \eqref{eq:sigma},
and this is equal to~$m_p$.
\end{proof}

\begin{prop}
\label{prop:ext}
Every finite semigroup which is an ideal extension of a non-trivial group by a non-trivial nilpotent semigroup has \sdpt\ $\fc$.
\end{prop}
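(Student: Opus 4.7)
My plan is to use the family of subdirect powers $W_M$ from the construction above, indexed by infinite subsets $M\subseteq\{n+1,n+2,\dots\}$ with $n:=|S|$. There are continuum many such $M$, and Lemma~\ref{la:SMsub} already gives that each $W_M$ is a countable subdirect power of $S$ in $S^\N$. It therefore remains to prove that $M\neq P$ forces $W_M\not\cong W_P$.

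The idea is to recover $M$ from $W_M$ as an isomorphism invariant via the root-count statistics of Lemma~\ref{la:countroots}. That lemma identifies $M$ with $\{|\sqrt[m+1]{\sigma}|:\sigma\in W_M\}\cap\{n+1,n+2,\dots\}$, but the very definition of $\sqrt[m+1]{\sigma}$ refers to the distinguished subset $G^\infty\subseteq W_M$, which a priori need not be preserved by an isomorphism. Bridging this gap is the only real obstacle, and I would do it by establishing the purely semigroup-theoretic characterisation
\[
G^\infty=\bigl\{\tau\in W_M\::\:\tau^{m+1}=\tau\bigr\}.
\]
The inclusion $\subseteq$ is immediate from $h^{m+1}=h$ for every $h\in G$. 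For $\supseteq$, a short case check on the three pieces $G^\infty$, $\Delta_S\setminus\Delta_G$, $U_M$ suffices: for $\overline{s}\in\Delta_S\setminus\Delta_G$, $s^{m+1}\neq s$ because $e$ is the unique idempotent of $S$, whence $G$ is the only maximal subgroup of $S$ and an element satisfying $s^{m+1}=s$ would necessarily lie in it; and for $\tau\in U_M$, the computation already carried out in the proof of Lemma~\ref{la:countroots} shows that $\tau^{m+1}$ differs from $\tau$ in coordinate $p+1$, where $\tau$ has entry $x$ and $\tau^{m+1}$ has entry $\underline{x}\neq x$.

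Once $G^\infty$ is known to be an isomorphism invariant, any isomorphism $f\colon W_M\to W_P$ restricts to a bijection $W_M\setminus G^\infty\to W_P\setminus G^\infty$ and, for each $\sigma$, induces a bijection $\sqrt[m+1]{\sigma}\to\sqrt[m+1]{f(\sigma)}$. Hence the set of root-cardinalities $\{|\sqrt[m+1]{\sigma}|:\sigma\in W_M\}$ is preserved by $f$, and intersecting with $\{n+1,n+2,\dots\}$ recovers $M$ from $W_M$ and $P$ from $W_P$ via Lemma~\ref{la:countroots}. Therefore $W_M\cong W_P$ forces $M=P$, completing the proof.
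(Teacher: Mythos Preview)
Your proof is correct and follows exactly the paper's approach: use the $W_M$ for infinite $M\subseteq\{n+1,n+2,\dots\}$, invoke Lemma~\ref{la:SMsub} for subdirectness, and Lemma~\ref{la:countroots} to distinguish them. The paper's proof simply cites Lemma~\ref{la:countroots} and leaves the isomorphism-invariance of $G^\infty$ implicit; your explicit characterisation $G^\infty=\{\tau\in W_M:\tau^{m+1}=\tau\}$ is a correct and welcome clarification of that tacit step (one could equally note that $G^\infty$ is the minimal ideal of $W_M$).
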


\begin{proof}

For such a semigroup $S$ of order $n$, letting $M$ range over all infinite subsets of $\{n+1,n+2,\dots\}$
yields continuum many semigroups $W_M$. They are all subdirect powers of $S$ by Lemma \ref{la:SMsub},
and are pairwise non-isomorphic by Lemma \ref{la:countroots}.

\end{proof}

%

\section{Concluding remarks}
\label{sec:conc}

We have seen that unlike in groups, where having \sdpt\ $\aleph_{0}$ is equivalent to being commutative by the results of Hickin and Plotkin \cite {hickin1981} and McKenzie \cite{mckenzie1982} (Theorem \ref{thm:hpm}), for semigroups this is not the case: `most' commutative semigroups in fact have uncountable \sdpt\ according to our Main Theorem. A natural question arises to classify all semigroups with countable \sdpt. In particular, one might wonder whether such semigroups must be commutative. This, however, is easily seen not to be the case: semigroups of left or right zeros (i.e. semigroups satisfying $xy=x$ or $xy=y$ for all $x,y$) are non-commutative, but are easily seen to have countable \sdpt, by an argument analogous to the proof for null semigroups (Proposition \ref{prop:null}).
More generally, the same holds for \emph{rectangular bands}, i.e. semigroups of the form $S=I\times J$, where $I$ and $J$ are any sets, and multiplication is given by $(i,j)(k,l)=(i,l)$. Clearly, the isomorphism type of a rectangular band is completely determined by the cardinalities $|I|$ and $|J|$.
An alternative description of rectangular bands is that they are precisely the semigroups satisfying $x^2=x$ and $xyz=xz$ for all $x,y,z$ \cite[Theorem 1.1.3]{howie95}.
It follows that subsemigroups and direct products of rectangular bands are again rectangular bands. Combining these observations together we obtain: 

\begin{prop}
\label{prop:rect}
Every finite rectangular band has \sdpt\ $\aleph_{0}$.
\qed
\end{prop}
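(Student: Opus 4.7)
The plan is to assemble the proposition directly from the three structural observations about rectangular bands collected in the paragraph preceding the statement. First, I would fix a finite rectangular band $S$ and an arbitrary countable subdirect power $T\leq S^\N$. Since the class of rectangular bands is closed under direct products and under subsemigroups, the ambient semigroup $S^\N$ is a rectangular band, and so is the subsemigroup $T$.

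Next, I would invoke the structural description $T\cong I\times J$ for some sets $I,J$, together with the fact that the isomorphism type of a rectangular band depends only on the pair of cardinalities $(|I|,|J|)$. Since $T$ is countable, the inequality $|I|\cdot|J|=|T|\leq\aleph_0$ forces each of $|I|$ and $|J|$ to be a countable cardinal (finite or $\aleph_0$); otherwise, one uncountable nonempty factor would already render the product uncountable.

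Finally, there are only countably many such pairs $(|I|,|J|)$, and hence only countably many possible isomorphism types of countable rectangular bands. As every countable subdirect power of $S$ belongs to this list, the \sdpt\ of $S$ is at most $\aleph_0$, and it is exactly $\aleph_0$ because, for instance, the subdirect powers $S^n$ ($n\in\N$) or the diagonal copy already yield infinitely many pairwise non-isomorphic examples (or simply $\aleph_0$ since we allow finitely many). There is no real obstacle here: the argument is essentially a bookkeeping application of the closure properties and the classification of rectangular bands by a pair of cardinalities.
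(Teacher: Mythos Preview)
Your proposal is correct and follows essentially the same route as the paper: the proposition carries a \qed because its proof is precisely the preceding paragraph, namely closure of rectangular bands under subsemigroups and direct products, together with the classification by the pair of cardinalities $(|I|,|J|)$. Your write-up is slightly more explicit (spelling out that $|I|\cdot|J|\leq\aleph_0$ forces both factors to be countable, and supplying $S^n$ to witness infinitely many types), but there is no substantive difference in strategy.
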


Note that a rectangular band $S=I\times J$ is commutative if and only if $|I|=|J|=1$.

One might also wonder whether the direct product of two finite semigroups of \sdpt\ $\aleph_{0}$ also necessarily has \sdpt\ $\aleph_{0}$.
However, again, this is not the case: the direct product of a non-trivial abelian group $G$ by a non-trivial null semigroup is a commutative semigroup and is an extension of $G$ by a (larger) null semigroup, and hence has \sdpt\ $\fc$ by the Main Theorem.

As we mentioned in the Introduction, both Hickin and Plotkin \cite{hickin1981} and McKenzie \cite{mckenzie1982} prove results that are more general than Theorem \ref{thm:hpm}. In particular, McKenzie's result states that if $G$ is a non-abelian group of any cardinality, 
and if $\kappa\geq |G|$ is an infinite cardinal, then $G$ has $2^\kappa$ non-isomorphic subdirect powers of cardinality $\kappa$. We may ask:

\begin{que}
Is it true that for every semigroup $S$, the number of subdirect powers of $S$ of cardinality $\kappa\geq |S|$ is either $\kappa$ or $2^\kappa$?
\end{que}

The above is not the case for \emph{unary algebras}, as was shown in \cite[Proposition 5.3]{ruwi}.
Specifically, for the monounary algebra $(\{0,1,2\},f)$ where $f(x)=\max(x-1,0)$, the number of non-isomorphic subdirect powers of cardinality $\kappa\geq\aleph_0$ is equal to the number of cardinals $\beta\leq\kappa$.
Unary algebras can of course be viewed as monoids acting on sets, and the main result of \cite{ruwi} can be interpreted as a classification of finite monoid acts of \sdpt\ $\aleph_{0}$. No further classifications are known to the best of our knowledge, and situation for rings, associative and Lie algebras, loops and lattices seems to be particularly worth investigating.

The question of the number of countable subdirect powers for an algebra is related to the notion of boolean separation.
We say that an algebraic structure $A$ is \emph{boolean separating} if $A^{B_1}\cong A^{B_2}$ implies $B_1\cong B_2$ for any boolean algebras $B_1$, $B_2$. Here $A^B$ denotes the \emph{boolean power} of the algebra $A$ by a boolean algebra $B$;
see \cite[Section IV.5]{burris}.
Since boolean powers are subdirect powers and since there are uncountably many countable boolean algebras, it follows that every boolean separating finite algebra has \sdpt\ $\aleph_{0}$. Finite boolean separating groups $G$ have been classified in \cite{apps82}.
In particular, when $G$ is subdirectly irreducible, it is boolean separating if and only if it is non-abelian \cite{lawrence81}.
Motivated by this, and by our work in this paper, we pose:

\begin{prob}
Classify finite boolean separating (commutative) semigroups.
\end{prob}

%
%

\bibliographystyle{plain}

\end{document}